\newcommand{\overbar}[1]{\mkern 1.5mu\overline{\mkern-1.5mu#1\mkern-1.5mu}\mkern 1.5mu}
\theoremstyle{definition}
\newtheorem{theorem}{Theorem}[section]
\newtheorem{remark}[theorem]{Remark}
\newtheorem{lemma}[theorem]{Lemma}
\newtheorem{corollary}[theorem]{Corollary}
 \title{Uniform Length Dominating Sequence Graphs}
    \author{Aysel Erey}
    \date{\small Department of Mathematics\\ Gebze Technical University \\ Kocaeli, TURKEY \\ E-mail: aysel.erey@gtu.edu.tr
     \\[\baselineskip] \today }
\begin{document}

\maketitle

\begin{abstract}
A sequence of vertices $(v_1,\, \dots , \,v_k)$ of a graph $G$ is called a {\it dominating closed neighborhood sequence} if $\{v_1,\, \dots , \,v_k\}$ is a dominating set of $G$ and $N[v_i]\nsubseteq \cup _{j=1}^{i-1} N[v_j]$ for every $i$. A graph $G$ is said to be {\it $k-$uniform} if all dominating closed neighborhood sequences in the graph have equal length $k$.  Bre{\v s}ar et al. \cite{bresar_k_uni} characterized $k$-uniform graphs with $k\leq 3$. In this article we extend their work by giving a complete characterization of all $k$-uniform graphs with $k\geq 4$. 
\end{abstract}

\thanks{\textit{Keywords}: domination, closed neighborhood sequence}

 \textup{2010} \textit{AMS Mathematics Subject Classification}: \textup{05C69}
\section{Introduction}

All graphs considered in this article are finite, simple, loopless and undirected. Given a graph $G$, let $\overbar{G}$ be the complement of $G$, and let $V(G)$ and $E(G)$ be the vertex set and the edge set of $G$ respectively. A vertex $u$ is a {\it neighbor} of vertex $v$ if they are adjacent. The {\it open neighborhood of v}, $N_G(v)$, consists of all neighbors of $v$ in $G$, and the {\it closed neighborhood of v}, $N_G[v]$, is equal to $N_G(v)\cup \{v\}$. We simply write $N(v)$ and $N[v]$ for $N_G(v)$ and $N_G[v]$ respectively when the graph $G$ is clear from the context. A vertex $v$ of $G$ is called an {\it isolated vertex} of $G$ if $N(v)=\emptyset$ and it is called a {\it dominating vertex} of $G$ if $N[v]=V(G)$. Two distinct vertices $u$ and $v$ are called {\it true twins} if $N[u]=N[v]$ and they are called {\it false twins} if $N(u)=N(v)$. For a subset $S$  of vertices of $G$, let $\displaystyle N(S)=\cup_{v\in S}N(v)$ and let $G\setminus S$ denote the subgraph induced by the vertices of $V(G)\setminus S$ (if $S=\{u\}$ is a singleton, we simply write $G\setminus u$). Also, $S$ is called a {\it dominating set} of $G$ if $S\cup N(S)=V(G)$, that is, every vertex in $V(G)\setminus S$ is adjacent to some vertex in $S$.  A {\it total dominating set} of a graph $G$ is a subset of vertices $S$ such that $N(S)=V(G)$, that is, every vertex in $V(G)$ is adjacent to some vertex in $S$. Note that every total dominating set of a graph $G$ is also a dominating set of $G$.

The {\it join} of two graphs $G$ and $H$, denoted by $G\vee H$, has vertex set $V(G\vee H)=V(G)\cup V(H)$ and edge set $E(G\vee H)=E(G)\cup E(H)\cup \{uv \, | \, u\in V(G) \ \text{and } v\in V(H) \}$. Let $\vee_t H$ denote the join of $t$ copies of the graph $H$ and $tH$ denote disjoint union of $t$ copies of $H$. The complete graph, path graph and cycle graph on $n$ vertices are denoted by $K_n,P_n$ and $C_n$ respectively. Lastly, let $K_{p_1,p_2}$ denote the complete bipartite graph with partition sizes $p_1$ and $p_2$, and let $K_{p_1,\dots ,p_t}$ denote the complete multipartite graph with $t$ parts of sizes $p_1,\dots ,p_t$ where $t\geq 2$.

A sequence of $k$ distinct vertices $(v_1,\, \dots , \,v_k)$ in $G$ is said to have {\it length} $k$ and it is called a {\it dominating sequence} (respectively {\it total dominating sequence}) of $G$ if the corresponding set $\{v_1, \dots , v_k\}$ is a dominating set (respectively total dominating set) of $G$. A sequence $(v_1,\, \dots , \,v_k)$ is called a {\it closed neighborhood sequence} (abbreviated to CNS) if, for each $i$ with $2\leq i\leq k$, we have 
\begin{center}$ N[v_i]\nsubseteq \cup _{j=1}^{i-1} N[v_j]$,
\end{center} or equivalently, $N[v_i]\setminus \cup _{j=1}^{i-1} N[v_j]\neq \emptyset$.  A sequence of vertices $(v_1, \dots , v_k)$ of a graph $G$ is called an {\it open neighborhood sequence} (abbreviated to ONS) of $G$ if $N(v_1)\neq \emptyset$ and \begin{center}$N(v_i)\nsubseteq \cup _{j=1}^{i-1} N(v_j)$
\end{center} for each $i$ with $2\leq i \leq k$.

The study of neighborhood sequences was initially motivated by some domination games in \cite{game1,game2,game3}. The total version of the domination game was introduced in \cite{total_game} and recently studied in \cite{irsic, jiang}. Variants of such sequences have connections to the minimum rank problem and the so called zero forcing number of the graph \cite{bresar_zero_forc,linalg}. Such sequences are also called {\it legal sequences} in the context of the domination games. Each vertex in a neighborhood sequence dominates (or totaly dominates) a new vertex which is not dominated (or totally dominated) by any of the preceding vertices.
 
 The lengths of these sequences are also related to some other important graph parameters which have been extensively studied in the literature. For example,
 the minimum length of a dominating closed neighborhood sequence of a graph $G$ is the well known {\it domination number} $\gamma (G)$ of $G$ and the maximum length of a dominating closed neighborhood sequence of $G$ is called the {\it Grundy domination number} $\gamma _{gr}(G)$ of $G$. For a graph $G$ with no isolated vertices, the minimum length of a total dominating sequence of $G$ is called the {\it total domination number} $\gamma _t (G)$ of $G$. Note that every minimum total dominating sequence of a graph $G$ is indeed an open neighborhood sequence of $G$, so $\gamma _t (G)$ is equal to the minimum length of a total dominating ONS. If $G$ has no isolated vertices, the maximum length of a total dominating ONS is known as the {\it Grundy total domination number} $\gamma_{gr}^t(G)$ of $G$ \cite{total}.

  We say that a graph $G$ is a {\it uniform length dominating sequence graph} if all  dominating closed neighborhood sequences have the same length. A graph $G$ is called $k${\it -uniform} if all dominating closed neighborhood sequences of $G$ have equal length $k$. In other words, a graph $G$ is $k$-uniform if and only if $\gamma (G)=\gamma _{gr}(G)=k$. 
Bre{\u s}ar et al.~\cite{bresar_k_uni} gave a characterization of $k$-uniform graphs for $k\in \{1,2,3\}$ (see Theorem~$3.6$ in \cite{bresar_k_uni}). In this article, we complete the characterization of $k$-uniform graphs by finding all $k$-uniform graphs with $k\geq 4$ (Corollary~\ref{main_cor}).

\section{Characterization of $k$-uniform graphs for $k\geq 4$}

Bre{\v s}ar et al. \cite{bresar_k_uni} gave the following characterization of $k$-uniform graphs for $k\leq 3$.

\begin{theorem}\label{bresar_thm}\cite{bresar_k_uni}
If $G$ is a graph, then
\begin{itemize}
\item $G$ is $1$-uniform if and only if $G$ is a complete graph;
\item $G$ is $2$-uniform if and only if its complement ${\overbar G}$  is the disjoint union of one or more complete bipartite graphs;
\item $G$ is $3$-uniform if and only if $G$ is the disjoint union of a $1$-uniform and a $2$-uniform graph.
\end{itemize}
\end{theorem}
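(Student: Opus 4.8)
The plan is to build the whole statement on two structural facts about closed neighborhood sequences that I would establish first. The first is a \emph{maximality principle}: a CNS is non-extendable if and only if it is dominating. Indeed, if $(v_1,\dots,v_t)$ is not dominating, then any undominated vertex $u$ satisfies $u\in N[u]\setminus\bigcup_{j\le t}N[v_j]$ and may be appended, while a dominating sequence satisfies $\bigcup_j N[v_j]=V(G)$ and so cannot be extended. Consequently $G$ is $k$-uniform precisely when every non-extendable CNS has length $k$, and moreover any minimal dominating set, listed in an arbitrary order, is a non-extendable CNS (each of its vertices has a private neighbor, which serves as a footprint regardless of the order). The second fact is \emph{additivity over components}: if $G=A\cupdot B$ then $\gamma(G)=\gamma(A)+\gamma(B)$ and $\gamma_{gr}(G)=\gamma_{gr}(A)+\gamma_{gr}(B)$, because a CNS of $G$ restricts to a CNS on each component while arbitrary interleavings of CNS of $A$ and $B$ are again CNS of $G$. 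Hence $G$ is uniform if and only if every component is uniform, and the values $k$ add over components.

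For the first bullet I would argue that a pair $(u,v)$ fails to be a CNS exactly when $N[v]\subseteq N[u]$; forbidding length-$2$ sequences for all ordered pairs forces $N[u]=N[v]$ for all $u,v$, and since $v\in N[v]$ this common set is $V(G)$, i.e.\ $G$ is complete, with the converse immediate. For the second bullet, necessity comes from the maximality principle: if $G$ is $2$-uniform then $\gamma_{gr}(G)=2$, so $G$ has no CNS of length $3$; but any three pairwise non-adjacent vertices $x,y,z$ give the CNS $(x,y,z)$, and any induced path $a\!-\!b\!-\!c\!-\!d$ gives the CNS $(a,b,d)$ (footprinting $c$ then $d$), so $\alpha(G)\le 2$ and $G$ is $P_4$-free, while $\gamma(G)=2$ rules out a dominating vertex, i.e.\ $\overbar G$ has no isolated vertex. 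Translating to the complement, $\overbar G$ is triangle-free, $P_4$-free, and without isolated vertices; since a connected triangle-free cograph is complete bipartite, $\overbar G$ is a disjoint union of complete bipartite graphs. For sufficiency I would use that $\overbar G=\cupdot_i\, K_{a_i,b_i}$ gives $G=\bigvee_i (K_{a_i}\cupdot K_{b_i})$; writing $A_i,B_i$ for the two cliques of the $i$th summand, every closed neighborhood equals $V(G)\setminus P$ for a part $P\in\{A_i,B_i\}$, and distinct parts are disjoint. A one-line inclusion computation then shows no CNS exceeds length $2$, while $(u,w)$ with $u\in A_1,\ w\in B_1$ is a dominating CNS of length $2$, so $\gamma=\gamma_{gr}=2$.

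For the third bullet I would use additivity to reduce to the classification of connected uniform graphs. Since $\gamma_{gr}(G)=3$ splits as a sum of the values of the (uniform) components, the only danger is a single connected $3$-uniform component, so everything hinges on the \textbf{key lemma} that \emph{no connected graph is uniform with value $\ge 3$} (equivalently, a connected graph with $\gamma\ge 3$ has $\gamma_{gr}>\gamma$). Granting this, the multiset of component values summing to $3$ is either $\{1,2\}$ or $\{1,1,1\}$, and in both cases $G$ is a disjoint union of a complete graph ($1$-uniform) and a $2$-uniform graph, the latter because two complete components $K_a\cupdot K_b$ are $2$-uniform (their complement is the single complete bipartite graph $K_{a,b}$, which is covered by the second bullet). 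The converse direction is then immediate from additivity.

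I expect the key lemma to be the main obstacle. My approach is: given a connected $G$ with $\gamma(G)=m\ge 3$, take a minimum (hence minimal) dominating set $D=\{d_1,\dots,d_m\}$ with private neighbors $p_1,\dots,p_m$, and use connectivity to produce two members whose closed neighborhoods either overlap in a vertex $z$ or are joined by an edge. I would then exhibit a dominating CNS of length $m+1$, which forces $\gamma_{gr}(G)>\gamma(G)$ and hence non-uniformity. The delicate point is that one cannot simply enlarge $D$, since a dominating set is already non-extendable; the extra footprinting vertex must be inserted \emph{early} in the order, and one must verify through a short case analysis --- according to whether the linking vertex lies in $D$ and whether its closed neighborhood swallows any private neighbor $p_l$ --- that a consistent assignment of distinct footprints to all $m+1$ vertices still exists.
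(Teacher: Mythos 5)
The statement you are proving is quoted in the paper from Bre\v{s}ar et al.\ \cite{bresar_k_uni} without proof, so there is no internal argument to compare against; your proposal has to stand on its own. Most of it does. The maximality principle (a CNS is non-extendable iff it is dominating), the observation that any ordering of a minimal dominating set is a dominating CNS via private neighbors, and additivity over components (the paper's Remark~\ref{conn_comp_remark}) are all correct and correctly deployed. Your first bullet is complete. Your second bullet is also complete and rather elegant: the CNSs $(x,y,z)$ and $(a,b,d)$ correctly force $\alpha(G)\le 2$ and $P_4$-freeness, a connected triangle-free cograph on at least two vertices is indeed complete bipartite (it is a join of two cographs, both of which must be edgeless), and the sufficiency computation with closed neighborhoods of the form $V(G)\setminus P$ for pairwise disjoint parts $P$ is airtight. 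The reduction of the third bullet to the nonexistence of a connected $3$-uniform graph, and the bookkeeping of component values $\{1,2\}$ and $\{1,1,1\}$ (with $K_a\cupdot K_b$ recognized as $2$-uniform via its complement $K_{a,b}$), are likewise fine.

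The genuine gap is exactly where you flag it: the key lemma (a connected graph with $\gamma\ge 3$, or even just $\gamma=3$, satisfies $\gamma_{gr}>\gamma$) is asserted with only a plan, and the plan as described is not yet a proof. Fixing a minimum dominating set $D=\{d_1,\dots,d_m\}$ with chosen private neighbors $p_1,\dots,p_m$ and a linking vertex $z$, your strategy of inserting $z$ early can fail for that particular $D$ and that particular choice of footprints: $z$ may be adjacent to the private neighbors of several $d_l$, and it can happen that \emph{every} alternative footprint of such a $d_l$ lies inside $N[z]\cup\bigcup_{j\ne l}N[d_j]$, so that no ordering of $D\cup\{z\}$ is a CNS. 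Escaping this requires either re-choosing the dominating set or building a long sequence that does not contain $D$ at all (e.g., starting from the private/peripheral vertices, as in a spider graph one uses the leaves rather than the center), and verifying that one of these escapes is always available is precisely the substantive content of Bre\v{s}ar et al.'s theorem --- not a short case analysis. Note also that you cannot close this gap by appealing to the present paper's Theorem~\ref{mymain1} or Corollary~\ref{main_cor} (which do imply that no connected $k$-uniform graph with $k\ge 3$ exists): the paper's induction uses the $k=3$ case of the theorem you are proving as its base, so that route is circular. As written, bullets one and two are proved and bullet three is reduced, correctly but incompletely, to an open step.
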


Observe that if $G$ is a $3$-uniform graph with no true twins, then $\overbar{G}$ is a friendship graph, that is, the join $K_1\vee tK_2$ for some $t\geq 1$. We use this characterization as the basis step of our induction and  extend it to $k\geq 4$ by showing that every $k$-uniform graph is indeed a disjoint union of $1$-uniform and $2$-uniform graphs. To prove our result we make use of the following two observations.


\begin{lemma}\label{k_uni_remark} Let $G$ be a $k$-uniform graph and $v$ be any vertex of $G$. Then,
\begin{itemize}
\item[(i)] the subgraph $G\setminus N[v]$ is $(k-1)$-uniform;
\item[(ii)] if $G$ has no true twins, then $G\setminus N[v]$ has no true twins.
\end{itemize}

\end{lemma}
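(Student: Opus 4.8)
The plan is to prove both parts by transferring closed neighborhood sequences between $G$ and the induced subgraph $G' := G\setminus N[v]$, resting on the single bookkeeping fact that for $u\in V(G')$ one has $N_{G'}[u]=N_G[u]\setminus N[v]$. For part (i) the key step is that prepending $v$ to any dominating CNS $(v_1,\dots,v_m)$ of $G'$ yields a dominating CNS $(v,v_1,\dots,v_m)$ of $G$, whose length is one greater. To see that the CNS property is preserved, I would check that each $v_i$ retains a private vertex: if $y\in N_{G'}[v_i]$ is uncovered by $v_1,\dots,v_{i-1}$ in $G'$, then $y\in V(G')$ gives $y\notin N_G[v]$, and since the closed neighborhoods of the $v_j$ agree with their $G'$-versions on $V(G')$, also $y\notin N_G[v_j]$ for $j<i$; thus $y$ witnesses $N_G[v_i]\nsubseteq N_G[v]\cup\bigcup_{j<i}N_G[v_j]$. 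Domination is immediate since $v$ covers $N[v]$ and the $v_i$ cover $V(G')$. As $G$ is $k$-uniform the resulting sequence has length $k$, so every dominating CNS of $G'$ has length exactly $k-1$; since $G'$ admits at least one, it is $(k-1)$-uniform. This part is routine once the neighborhood bookkeeping is in place.

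For part (ii) I would argue the contrapositive: assuming $u,w\in V(G')$ are true twins in $G'$, I will show they are true twins in $G$. Since $N_{G'}[u]=N_{G'}[w]$, the neighborhoods $N_G[u]$ and $N_G[w]$ can differ only inside $N[v]$; suppose for contradiction they do differ, say $z\sim u$ but $z\nsim w$ with $z\in N[v]$ (note $z\neq v$, since $u\notin N[v]$, so in fact $z\in N(v)$). The idea is to exploit that $u$ and $w$, being twins in $G'$ but not in $G$, can both legally appear in a CNS of $G$, whereas at most one of them could appear in any CNS of $G'$. Concretely I would form the sequence $(w,u,v,a_1,\dots,a_{k-2})$, where $(a_1,\dots,a_{k-2})$ is a dominating CNS of $A:=G'\setminus N_{G'}[w]$; by part (i) applied to the $(k-1)$-uniform graph $G'$, the graph $A$ is $(k-2)$-uniform, so such a sequence exists.

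The main obstacle, and the crux of the argument, is verifying that this length-$(k+1)$ sequence is a dominating CNS of $G$, which then contradicts $\gamma_{gr}(G)=k$ and forces $N_G[u]=N_G[w]$. The prefix $(w,u,v)$ is legal because $z$ is private to $u$ over $w$, and $v$ is private to itself over $u,w$ (as $u,w\notin N[v]$ gives $v\nsim u$, $v\nsim w$). Each $a_i$ keeps a private vertex $y_i\in V(A)$ in the sequence for $A$; since $y_i$ lies outside $N[v]$ and outside $N_{G'}[w]=N_{G'}[u]$, it lies outside $N_G[v]$, $N_G[w]$, and $N_G[u]$, and outside $N_G[a_j]$ for $j<i$, so it remains private in $G$. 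Finally domination holds because $\{w,u,v\}$ covers $N[v]\cup N_{G'}[w]$ while $\{a_1,\dots,a_{k-2}\}$ covers $V(A)=V(G')\setminus N_{G'}[w]$, and together $N[v]\cup V(G')=V(G)$. The resulting dominating CNS of length $k+1$ is the desired contradiction, completing the contrapositive.
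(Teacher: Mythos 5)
Your proposal is correct and follows essentially the same route as the paper: part (i) transfers a dominating CNS of $G\setminus N[v]$ to one of $G$ augmented by $v$ (you prepend $v$ where the paper appends it), and part (ii) derives a dominating closed neighborhood sequence of $G$ of length $k+1$ starting with the twin pair, whose first two members are separated by a distinguishing vertex in $N(v)$, contradicting $k$-uniformity. The only cosmetic difference is that in (ii) you place $v$ third and finish with a dominating CNS of $\bigl(G\setminus N[v]\bigr)\setminus N_{G\setminus N[v]}[w]$ obtained via part (i), whereas the paper extends the single vertex $w_2$ to a dominating CNS of $G\setminus N[v]$ and appends $v$ last; both rest on the same private-vertex bookkeeping for induced subgraphs.
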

\begin{proof}
(i) Let  $(v_1,\dots , v_r)$ be a dominating CNS of the subgraph $G\setminus N[v]$. It is clear that $(v_1,\dots , v_r, v)$ is a dominating CNS of $G$, as $v\notin \cup_{i=1}^rN[v_i]$ and $N_{G\setminus N[v]}[v_{i+1}]\setminus N_{G\setminus N[v]}[v_{i}] \neq \emptyset $ implies that $N_{G}[v_{i+1}]\setminus N_{G}[v_{i}] \neq \emptyset $, for each $i\in \{1,\dots , r-1\}$. Hence $r=k-1$ and the result follows.

(ii) Suppose on the contrary that $G\setminus N[v]$ has two true twins, say $w_1$ and $w_2$. There are no true twins in $G$, so there is a vertex $w'\in N(v)$ such that $w'$ is adjacent to exactly one of $w_1$ and $w_2$. Without loss of generality, suppose that $w'w_1\notin E(G)$ and $w'w_2\in E(G)$. In any graph, every sequence consisting of a single vertex can be extended to a dominating CNS of the graph. So, there exist vertices $w_3, \dots , w_k$  such that $(w_2, w_3,\dots , w_k)$ is a dominating CNS of $G\setminus N[v]$ since it is a $(k-1)$-uniform graph. Note that $N_{G\setminus N[v]}[w_{i+1}]\setminus N_{G\setminus N[v]}[w_{i}] \neq \emptyset $ implies that $N_{G}[w_{i+1}]\setminus N_{G}[w_{i}] \neq \emptyset $, for each $i\in \{2,\dots , k-1\}$. Now, $(w_1,w_2,\dots , w_k,v)$ is a dominating CNS of $G$ of length $k+1$, as $w'\in N_G[w_2]\setminus N_G[w_1]$ and $v\notin \cup_{i=1}^kN[w_i]$. The latter contradicts with $G$ being $k$-uniform. Thus, $G\setminus N[v]$ has no true twins.

\end{proof}

\begin{remark}\label{conn_comp_remark}
Let $G$ be a graph with connected components $G_1,\dots , G_c$. Then, $G$ is $k$-uniform if and only if each $G_i$ is $k_i$-uniform where $k=k_1+\cdots +k_c$ and $k_i\geq 1$.
\end{remark}

\begin{theorem}\label{mymain1} If $G$ is a $k$-uniform graph with $k\geq 3$ and $G$ has no true twins, then $G$ is a disjoint union of $1$-uniform and $2$-uniform graphs.
\end{theorem}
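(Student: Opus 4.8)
The plan is to induct on $k$, taking Theorem~\ref{bresar_thm} as the base case $k=3$ and proving the statement for $k\ge4$ by strong induction (assuming it for all $k'$ with $3\le k'<k$). Throughout I would use Remark~\ref{conn_comp_remark} to reduce to connected components: since true twins always lie in a common component, every component of a graph without true twins again has no true twins, and each component $G_i$ of the $k$-uniform graph $G$ is $k_i$-uniform with $\sum k_i=k$. It therefore suffices to show that each component is $1$- or $2$-uniform, i.e. that no component is $j$-uniform with $j\ge3$. Note also that, with no true twins, a $1$-uniform component must be $K_1$ (a larger clique has true twins) and a connected $2$-uniform component must be some $K_{2,\dots,2}$ (by the $2$-uniform clause of Theorem~\ref{bresar_thm}, $\overbar{G_i}$ is a disjoint union of complete bipartite graphs, and the no-true-twins condition forces each piece to be a single edge).

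First I would dispose of the disconnected case. If $G$ is disconnected then each component satisfies $k_i<k$; for a component with $k_i\le2$ there is nothing to prove, while a component with $3\le k_i<k$ is, by the induction hypothesis, a disjoint union of $1$- and $2$-uniform graphs. But such a disjoint union with total uniformity $k_i\ge3$ must have at least two pieces (each piece contributes $1$ or $2$), contradicting connectedness of the component. Hence every $k_i\le2$ and $G$ is as claimed.

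The heart of the argument is the connected case, where I must show that a connected $G$ with no true twins cannot be $k$-uniform for $k\ge4$, so that this case is vacuous. Assume it is $k$-uniform. Since $G$ is connected and not complete (complete graphs are $1$-uniform), it has a vertex $v$ with two non-adjacent neighbours $w_1,w_2$: take two vertices at distance $2$ and their common neighbour. By Lemma~\ref{k_uni_remark} the graph $H:=G\setminus N[v]$ is $(k-1)$-uniform with no true twins, so by the induction hypothesis $H$ is a disjoint union of isolated vertices and graphs $K_{2,\dots,2}$; as $k-1\ge3$, $H$ is disconnected. I would then force a contradiction by producing a closed neighbourhood sequence of length $k+1$: any such sequence extends to a maximal one, which is necessarily dominating, yielding a dominating CNS of length $>k=\gamma_{gr}(G)$.

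The construction of this over-long sequence is where I expect the real difficulty. The idea is to cover $H$ by a maximal (hence length-$(k-1)$) CNS $(h_1,\dots,h_{k-1})$ chosen inside $H$, which remains a legal sequence in $G$ since closed neighbourhoods only grow and all its footprints lie in $H$; one then finishes the still-uncovered set $N[v]$ \emph{wastefully}, using two vertices with distinct private footprints in place of the single vertex $v$. Concretely, appending $w_1$ claims the footprint $v$ (as $v\in N[w_1]$ is uncovered, every $h_i$ being non-adjacent to $v$), after which $w_2\notin N[w_1]$ can be appended to claim the footprint $w_2$, giving length $k+1$. The obstacle is guaranteeing that $w_2$ is not already dominated by the chosen $H$-cover, that is, that $(h_1,\dots,h_{k-1})$ can be selected to avoid the neighbours of $w_2$ in $H$. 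The flexibility of maximal CNS's of the $K_{2,\dots,2}$ components helps here, but the isolated vertices of $H$ are forced into any cover, so the genuinely delicate point is to choose the pair $\{w_1,w_2\}$ (and, if necessary, the peeling vertex $v$) compatibly with an $H$-cover avoiding $N(w_2)$. I expect this to require a short case analysis according to how the components of $H$ attach to $N(v)$, and this compatibility step is the main obstacle of the proof.
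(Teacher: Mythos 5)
Your induction skeleton (base case $k=3$, reduction to components via Remark~\ref{conn_comp_remark}, and the identification of $H=G\setminus N[v]$ as a disjoint union of isolated vertices and cocktail-party graphs $\vee_{t_i}\overbar{K_2}$) coincides with the paper's, and your handling of the disconnected case is correct. But the heart of the theorem --- ruling out a \emph{connected} $k$-uniform $G$ with no true twins --- is exactly the step you leave open, so the proposal has a genuine gap. Your plan requires a dominating CNS of $H$ avoiding $N(w_2)$, yet every dominating CNS of $H$ is forced to contain all $r$ isolated vertices of $H$ and two vertices from each cocktail-party component; nothing you say excludes the bad configurations (for instance, every nonadjacent pair $w_1,w_2\in N(v)$ having $w_2$ adjacent to an isolated vertex of $H$, or adjacent to all but one vertex of some component), and you give no mechanism for re-choosing $v$, $w_1$, $w_2$ so that compatibility holds. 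Announcing that ``a short case analysis'' should settle it is precisely the part that needs proof, and it is not a routine verification.

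The paper closes this gap with two devices, neither of which is an overlong sequence in $G$. First, it exploits the lower bound $\gamma(G)=k$ rather than the upper bound $\gamma_{gr}(G)=k$: if some $w\in N(v)$ were adjacent to pieces of two different components of $H$, then $v,w$ together with the remaining isolated vertices and a nonadjacent pair $u_i,u_i'$ from each remaining cocktail component would form a dominating set of size $r+2t=k-1$, which contains a dominating CNS of length at most $k-1$, a contradiction. This shows the attachment sets $A_i=N(v_i)\cap N(v)$ and $B_j=N(V(H_j))\cap N(v)$ are pairwise disjoint (and nonempty, by connectivity). Second, it applies the induction hypothesis to a \emph{different} deletion: with the attachments disjoint, $G\setminus N[v_1]$ (when $r\geq 1$), respectively $G\setminus N[u_1']$ or $(G\setminus N[u_1'])\setminus u_1$ (when $r=0$; here one first shows $u_1$ and $u_1'$ have identical neighborhoods in $N(v)$, and uses that $k-1=2t$ forces $k$ odd, so $k-2\geq 3$), is \emph{connected}, contradicting that by induction it must be a disjoint union of $1$- and $2$-uniform graphs and hence disconnected. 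So the contradiction comes from connectivity plus the induction hypothesis, not from exhibiting a CNS of length $k+1$; supplying an argument of that second kind, or proving your compatibility claim outright, is the missing idea in your proposal.
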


\begin{proof}
We proceed by strong induction on $k$. For $k=3$, the result follows from Theorem~\ref{bresar_thm}. Let $G$ be a $k$-uniform graph with no true twins where $k\geq 4$. Let $v$ be a vertex of $G$. By Lemma~\ref{k_uni_remark}, the subgraph $G\setminus N[v]$ is $(k-1)$-uniform and it has no true twins. It follows that $G\setminus N[v]$ is a disjoint union of $1$-uniform and $2$-uniform graphs by the induction hypothesis. By Theorem~\ref{bresar_thm}, every $2$-uniform graph is either connected or disjoint union of two $1$-uniform graphs. Let $G_1, \dots , G_r$ be the $1$-uniform connected components  and $H_1,\dots , H_t$ be the $2$-uniform connected components of the subgraph  $G\setminus N[v]$, if any. Since $G\setminus N[v]$ has no true twins, $G_i\cong K_1$ for each $i\in \{1,\dots , r\}$ and  $H_i\cong \vee_{t_i} \overbar{K_2}$ for some integer $t_i\geq 2$ for each $i\in \{1,\dots , t\}$ (note that $t_i\geq 2$ as $H_i$ is connected). Note that  $k-1=r+2t$ where $r,t\geq 0$ by Remark~\ref{conn_comp_remark}. In the sequel, let $V(G_i)=\{v_i\}$ for each $i\in \{1,\dots , r\}$,  and $u_i$ and $u_i'$ be two nonadjacent vertices of $H_i$ for each $i\in \{1,\dots , t\}$.

Now let us show that $G$ is disconnected. Suppose on the contrary that $G$ is connected.
Let $A_i=N(v_i)\cap N(v)$ and $B_i=N(V(H_i))\cap N(v)$ for each $i$. Since $G$ is connected, $A_i$ and $B_i$ are nonempty for each $i$. Let us show that $A_1,\dots , A_r, B_1,\dots , B_t$ are mutually disjoint.
\begin{itemize}
\item $A_i\cap A_j=\emptyset$ whenever $i\neq j$:

Without loss of generality, suppose on the contrary that there exists a vertex $w$ in $N(v)$ such that $w$ is adjacent to both $v_1$ and $v_2$. Then, $(v,w,v_3,\dots ,v_r, u_1,u_1',\dots , u_t,u_t')$ is a  dominating sequence of $G$ which has length $r+2t=k-1$. By removing some vertices from this sequence, if necessary, we obtain a subsequence of it which is a dominating CNS of $G$ with length at most $k-1$. The latter  contradicts with $G$ being $k$-uniform.

\item $B_i\cap B_j=\emptyset$ whenever $i\neq j$:

Without loss, suppose on the contrary that there exists a vertex $w$ in $N(v)$ such that $w$ is adjacent to both $u_1$ and $u_2$. Then, $(v,w, u_1',u_2',u_3,u_3',\dots , u_t,u_t', v_1,\dots , v_r)$ is a dominating sequence of $G$ which has length $r+2t=k-1$. As in the previous case, we can obtain a dominating CNS of length at most $k-1$ which is a contradiction.

\item $A_i\cap B_j=\emptyset$ for every $i$ and $j$:

Without loss, suppose on the contrary that there exists a vertex $w$ in $N(v)$ such that $w$ is adjacent to both $v_1$ and $u_1$. In this case we obtain a contradiction again by finding  the dominating sequence $(v,w,v_2,\dots v_r, u_1',u_2,u_2',\dots , u_t,u_t')$ of $G$ with length $k-1$.
\end{itemize}

If $r\geq 1$, we consider removing the vertex $v_1$ from $G$.  The vertex $v_1$ has no neighbor in $A_i$ or $B_j$ for each $i\neq 1$ and $j\in \{1,\dots , t\}$, as $A_i$'s are mutually disjoint and $A_i\cap B_j=\emptyset$ for every $i$ and $j$. Also note that $A_i$'s and $B_j$'s are nonempty. So, there is a walk between every pair of vertices in $G\setminus N[v_1]$ via $v$ which makes the subgraph $G\setminus N[v_1]$ connected. But this is a contradiction because  $G\setminus N[v_1]$ is $(k-1)$-uniform by Lemma~\ref{k_uni_remark} and hence must be disconnected by the induction hypothesis.

If  $r=0$, then $G\setminus N[v]$ has $t$ connected components all of which are $2$-uniform graphs. So, $k-1=2t$ in this case. Now we may assume that $k\geq 5$ since $k$ is odd and $k\geq 4$. First let us show that $u_1$ and $u_1'$ have the same neighbors in $N(v)$. Suppose on the contrary that there is a vertex $w$ in $N(v)$ which is adjacent to exactly one of $u_1$ and $u_1'$. Without loss of generality, assume that $wu_1\in E(G)$ and $wu_1'\notin E(G)$. By Lemma~\ref{k_uni_remark}, the subgraph $G\setminus N[u_1']$ is $(k-1)$-uniform and has no true twins. However, $B_i$'s are mutually disjoint, the vertex $u_1$ has a neighbor in $N(v)$ and $u_1$ is the only vertex of $H_1$ which belongs to $G\setminus N[u_1']$. So, the subgraph $G\setminus N[u_1']$ is connected and the latter contradicts with the induction hypothesis. Now, the vertices $u_1$ and $u_1'$ must have the same neighbors in $N(v)$ and  hence, the vertex $u_1$ is an isolated vertex of $G\setminus N[u_1']$. Therefore, the subgraph $(G\setminus N[u_1'])\setminus u_1$ must be $(k-2)$-uniform by Remark~\ref{conn_comp_remark} and it has no true twins. As $k-2\geq 3$, the subgraph $(G\setminus N[u_1'])\setminus u_1$ is a disjoint union of $1$-uniform and $2$-uniform graphs by the induction hypothesis. This is again a contradiction, as $(G\setminus N[u_1'])\setminus u_1$ is connected.

Thus, $G$ is disconnected and the result follows by induction and Remark~\ref{conn_comp_remark}.

\end{proof}

Let $G'$ be a graph obtained from another graph $G$ by adding a new true twin vertex and $k$ be any positive integer. Then, observe that $G'$ is $k$-uniform if and only if $G$ is $k$-uniform. Thus, we obtain a characterization of all $k$-uniform graphs as an immediate consequence of Theorem~\ref{mymain1}.

\begin{corollary}\label{main_cor} Every $k$-uniform graph is a disjoint union of $1$-uniform and $2$-uniform graphs.
\end{corollary}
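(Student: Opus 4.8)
The plan is to reduce the general statement to Theorem~\ref{mymain1} by first stripping away true twins, applying the twin-free characterization, and then reinserting the twins while checking that the decomposition is preserved at each step. The whole argument rests on the observation recorded just before the corollary: adjoining a new true twin vertex to a graph preserves its $k$-uniformity, and, read in reverse, so does deleting a true twin.

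First I would dispose of the small cases. By Theorem~\ref{bresar_thm}, any $1$-uniform or $2$-uniform graph is itself (a single-summand) disjoint union of $1$-uniform and $2$-uniform graphs, so the claim is immediate for $k\le 2$, and I may assume $k\ge 3$. When $k\ge 3$ and $G$ has no true twins, Theorem~\ref{mymain1} applies verbatim. The entire content therefore lies in the case where $G$ has true twins. Starting from $G$, I repeatedly delete a true twin; by the observation each deletion keeps the graph $k$-uniform, so after finitely many steps I reach a graph $G_0$ with no true twins that is still $k$-uniform. Theorem~\ref{mymain1} then gives a decomposition $G_0 = A_1\cupdot\cdots\cupdot A_m$ into $1$-uniform and $2$-uniform connected pieces.

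It remains to reinsert the deleted twins one at a time and verify that the decomposition survives each insertion; this is the step I expect to carry the weight. When a true twin $u$ of a vertex $w$ is added back, one has $N[u]=N[w]$, so $u$ is adjacent to $w$ and, since $w$ lies in some component $A_j$, every neighbor of $u$ lies inside $A_j$ as well; thus $u$ enlarges the single component $A_j$ to $A_j\cup\{u\}$ and leaves all other components untouched. Because $A_j\cup\{u\}$ is exactly $A_j$ with a true twin adjoined, the observation guarantees that it remains $1$-uniform or $2$-uniform, so the graph obtained after this insertion is again a disjoint union of $1$-uniform and $2$-uniform graphs. Tracking this through all the reinsertions returns $G$ and yields the claim.

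The only point requiring care is the verification that a readded twin attaches to a single component, so that \emph{adjoining a true twin to one component} is the correct local picture rather than something that could link two pieces together; this follows at once from $N[u]=N[w]\subseteq V(A_j)$. Everything else is a bookkeeping induction on the number of true twins, with Theorem~\ref{mymain1} supplying the base and the twin-preservation observation driving both the reduction and its reversal.
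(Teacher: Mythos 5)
Your proposal is correct and follows essentially the same route as the paper, which derives the corollary from Theorem~\ref{mymain1} together with the observation that adjoining (equivalently, deleting) a true twin preserves $k$-uniformity; you merely make explicit the twin-stripping and reinsertion bookkeeping that the paper leaves as ``an immediate consequence,'' including the correct check that a reinserted twin $u$ with $N[u]=N[w]$ attaches to the single component containing $w$.
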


\section{Concluding remarks}

Let us consider length uniformity for two types of open neighborhood sequences. Let $G$ be a graph   with no isolated vertices. We say that $G$ is {\it open $k$-uniform} if every dominating ONS of $G$ has length $k$. Also, we call a graph $G$ {\it total $k$-uniform} if every total dominating ONS of $G$ has length $k$. Note that a graph $G$ is total $k$-uniform if and only if $\gamma _t(G)=\gamma_{gr}^t(G)=k$.

No graph has a total dominating sequence of length $1$ by definition, so $\gamma _t(G)\geq 2$ for every graph $G$. In \cite{total}, it was shown that $\gamma _t(G)=2$ if and only if $G$ is a multipartite graph (Theorem~4.4), and if $G$ is a graph with $\gamma _t(G)=3$, then $\gamma _{gr}^t(G)>3$ (Theorem 3.2). Hence, these results immediately give a characterization of total $k$-uniform graphs with $k\leq 3$.

\begin{theorem}\cite{total}\label{total_thm}
\begin{itemize}
\item[(i)] There are no total $k$-uniform graphs with $k\in \{1,3\}$;
\item[(ii)] A graph is total $2$-uniform if and only if it is a complete multipartite graph.
\end{itemize}
\end{theorem}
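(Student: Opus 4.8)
The plan is to restate total $k$-uniformity as the single equality $\gamma_t(G)=\gamma^t_{gr}(G)=k$ and to lean on two elementary facts together with the two cited results of \cite{total}. The first fact is $\gamma_t(G)\ge 2$, already noted in the excerpt. The second is an extension lemma that I would prove first: in a graph without isolated vertices, every ONS can be extended to a \emph{total dominating} ONS. Indeed, extend a given ONS greedily to a maximal ONS $(v_1,\dots,v_m)$ (possible since $V(G)$ is finite); if $\bigcup_{j=1}^m N(v_j)\ne V(G)$, pick a vertex $z$ with no neighbor among the $v_j$ and a neighbor $t$ of $z$ (which exists as $z$ is not isolated), so that $z\in N(t)\setminus\bigcup_{j=1}^m N(v_j)$ and the sequence could be prolonged by $t$, contradicting maximality. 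Thus maximal ONSs are exactly the total dominating ones, every such graph has at least one total dominating ONS, and $\gamma_t(G)\le \gamma^t_{gr}(G)$.

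For part (i), the case $k=1$ is immediate because $\gamma_t(G)\ge 2$ excludes $\gamma_t(G)=1$. For $k=3$, total $3$-uniformity would force $\gamma_t(G)=3$; but Theorem~3.2 of \cite{total} gives $\gamma^t_{gr}(G)>3$ in that case, so $\gamma^t_{gr}(G)\ne 3$ and no graph is total $3$-uniform.

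For part (ii), I would first reduce the two-sided equality to one side: since $2\le \gamma_t(G)\le \gamma^t_{gr}(G)$, the condition $\gamma^t_{gr}(G)=2$ already forces $\gamma_t(G)=2$, so total $2$-uniformity is equivalent to $\gamma^t_{gr}(G)=2$. It then remains to show $\gamma^t_{gr}(G)=2$ if and only if $G$ is complete multipartite. For the easy direction, write $G=K_{p_1,\dots,p_t}$ and note $N(v)=V(G)\setminus P(v)$ for the part $P(v)$ containing $v$: in any ONS the second vertex must have a neighbor in the part of the first, hence lies in a different part, after which $N(v_1)\cup N(v_2)=V(G)$ and no third vertex can satisfy the ONS condition; two vertices from different parts also form a total dominating set, so $\gamma_t(G)=\gamma^t_{gr}(G)=2$.

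The harder direction I would prove by contraposition, and this is where the main difficulty lies. If $G$ is not complete multipartite then nonadjacency is not transitive, so there are vertices $u,v,w$ with $uw\in E(G)$ but $uv,vw\notin E(G)$ (an induced $\overbar{P_3}$). As $v$ is not isolated it has a neighbor $x$, necessarily distinct from $u$ and $w$. I claim $(u,w,x)$ is an ONS: $w\in N(u)$ shows $N(u)\ne\emptyset$; $u\in N(w)\setminus N(u)$ (there are no loops) handles the second step; and $v\in N(x)$ while $v\notin N(u)\cup N(w)$ (because $u\not\sim v$ and $w\not\sim v$) handles the third. By the extension lemma this length-$3$ ONS extends to a total dominating ONS of length at least $3$, so $\gamma^t_{gr}(G)\ge 3$, contradicting $\gamma^t_{gr}(G)=2$; hence $G$ is complete multipartite. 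The crux throughout is isolating this three-vertex configuration and verifying it yields a genuine length-$3$ ONS; the extension lemma then converts it into the required length contradiction, while the remaining cases are short.
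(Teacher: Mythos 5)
Your proof is correct, but it is genuinely more self-contained than what the paper does: the paper offers no proof of this theorem at all, simply quoting it as a consequence of two results from \cite{total} --- Theorem~4.4 there ($\gamma_t(G)=2$ if and only if $G$ is complete multipartite) and Theorem~3.2 there ($\gamma_t(G)=3$ implies $\gamma_{gr}^t(G)>3$) --- together with the trivial bound $\gamma_t(G)\geq 2$. Your treatment of $k=1$ and $k=3$ coincides with the paper's (you also invoke Theorem~3.2 of \cite{total} for $k=3$, so that case remains a citation), but for part (ii) you replace the appeal to Theorem~4.4 by a direct argument: your extension lemma (maximal ONSs are exactly the total dominating ONSs, proved by the greedy prolongation via a neighbor of an undominated vertex --- note that such a neighbor $t$ is automatically new to the sequence, since $t=v_i$ would put $z$ in $N(v_i)$) is sound, and your contrapositive via an induced $\overbar{P_3}$ (the triple $u,v,w$ with $uw\in E(G)$, $uv,vw\notin E(G)$, which exists precisely when nonadjacency fails to be transitive) correctly produces the length-$3$ ONS $(u,w,x)$ and hence $\gamma_{gr}^t(G)\geq 3$. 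Your route buys a complete, elementary proof of the $\gamma^t_{gr}(G)=2$ characterization and also quietly fixes a gloss in the paper's ``immediately'': deducing total $2$-uniformity from $\gamma_t(G)=2$ alone requires checking that complete multipartite graphs admit no ONS of length~$3$, which you do explicitly via $N(v_1)\cup N(v_2)=V(G)$ for $v_1,v_2$ in distinct parts. What the citation approach buys the paper is brevity, which is appropriate since this theorem is background for Corollary~3.3 rather than a contribution; your extension lemma is, incidentally, the same greedy idea the paper uses in Section~3 to show that a longest dominating ONS is total dominating (Remark~3.2(iii)).
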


We also remark that in \cite{bresar_do} it was shown that there are no graphs $G$ with $\gamma _{gr}^t(G)=3$ (Proposition~3.9). So, this gives an alternative proof for the nonexistence of total $3$-uniform graphs. Recently, the authors in \cite{total4} studied  the characterizations of total $k$-uniform graphs with $k\in \{4,6\}$ in some graph classes.

Every total dominating sequence in a graph is also a dominating sequence. So, it is clear that the minimum length of a dominating ONS is at most $\gamma _t(G)$ and the maximum length of a dominating ONS is at least   $\gamma _{gr}^t(G)$. Also, if every dominating ONS has  length $k$ in a graph, then every total dominating ONS has length $k$ too. Hence, every open $k$-uniform graph is indeed a total $k$-uniform graph. Moreover, one can show that a longest dominating ONS must be a total dominating sequence. To see this, consider a longest dominating ONS $(v_1,\dots , v_k)$. Suppose on the contrary that it is not a total dominating sequence. So there is a vertex, without loss of generality, say $v_1$, such that $v_1$ is nonadjacent to every vertex in $\{v_2,\dots , v_k\}$. Since graph has no isolated vertices, $v_1$ has a neighbor $w$. Now $(v_1, \dots ,v_k, w)$ is  a dominating ONS which contradicts with $(v_1,\dots , v_k)$ being longest such sequence. We summarize all of this in the following.

\begin{remark}\label{open_rk} Let $G$ be any graph. Then,
\begin{itemize}
\item[(i)] If $G$ is an open $k$-uniform graph, then $G$ is also total $k$-uniform.
\item[(ii)]The length of a minimum dominating ONS in $G$ is at most $\gamma _t(G)$;
\item[(iii)]  The length of a maximum dominating ONS in $G$ is equal to $\gamma _{gr}^t(G)$.
\end{itemize}
\end{remark}

Note that the minimum lengths of a dominating ONS and a  total dominating ONS in a graph may be different. For example, in a complete graph $G$, the minimum length of a  dominating ONS is $1$ whereas $\gamma_t (G)$ is $2$. In a path graph $P_5$, the minimum length of a  dominating ONS is $2$ whereas $\gamma_t (P_5)$ is $3$.

\begin{corollary}\label{openthm} Let $G$ be a graph, then
\begin{itemize}
\item[(i)] There are no open $k$-uniform graphs with $k\in \{1,3\}$;
\item[(ii)] $G$ is open $2$-uniform if and only if $G$ is a complete multipartite graph $K_{p_{1}, \cdots ,p_t}$ where  $p_i\geq 2$ for each $i=1,\dots, t$.
\end{itemize}
\end{corollary}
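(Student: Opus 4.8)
The plan is to reduce both parts to the characterization of total $k$-uniform graphs in Theorem~\ref{total_thm}, using the comparison between open neighborhood sequences and total dominating sequences recorded in Remark~\ref{open_rk}, and then to account for the extra condition $p_i\ge 2$ by determining exactly when a graph admits a dominating ONS of length $1$. For part~(i) I would argue by contradiction: if $G$ were open $k$-uniform with $k\in\{1,3\}$, then by Remark~\ref{open_rk}(i) the graph $G$ would also be total $k$-uniform, contradicting Theorem~\ref{total_thm}(i). Hence no open $k$-uniform graph exists for $k\in\{1,3\}$.

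For the forward implication of part~(ii), suppose $G$ is open $2$-uniform. By Remark~\ref{open_rk}(i) it is total $2$-uniform, so Theorem~\ref{total_thm}(ii) shows that $G$ is a complete multipartite graph $K_{p_1,\dots,p_t}$; write $V_1,\dots,V_t$ for its parts. It remains to exclude a part of size one. If some $V_i=\{v\}$, then $N[v]=V(G)$, so $\{v\}$ is a dominating set, and since $G$ has no isolated vertices $(v)$ is a dominating ONS of length $1$, contradicting open $2$-uniformity; hence $p_i\ge 2$ for all $i$. Conversely, let $G=K_{p_1,\dots,p_t}$ with every $p_i\ge 2$. By Theorem~\ref{total_thm}(ii) the graph is total $2$-uniform, so $\gamma_t(G)=\gamma_{gr}^t(G)=2$; Remark~\ref{open_rk}(iii) then gives that the maximum length of a dominating ONS is $2$, while $\gamma_t(G)=2$ supplies a (total) dominating ONS of length $2$. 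To see that no dominating ONS has length $1$, note that a length-$1$ dominating ONS needs a dominating vertex, whereas for $v\in V_i$ we have $N[v]=V(G)\setminus(V_i\setminus\{v\})\subsetneq V(G)$ because $p_i\ge 2$. Thus every dominating ONS has length exactly $2$, i.e.\ $G$ is open $2$-uniform.

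The only input beyond quoting the total version is the bookkeeping around dominating ONS of length $1$, namely the equivalence between ``$G$ has a singleton part'', ``$G$ has a dominating vertex'', and ``$G$ admits a dominating ONS of length $1$''. I expect this small equivalence, rather than any appeal to Theorem~\ref{total_thm} or Remark~\ref{open_rk}, to be the step requiring the most care, since it is precisely what separates open $2$-uniformity from the weaker total $2$-uniformity.
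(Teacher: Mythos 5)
Your proposal is correct and follows essentially the same route as the paper: both parts are reduced to Theorem~\ref{total_thm} via Remark~\ref{open_rk}(i), and the condition $p_i\geq 2$ is handled exactly as in the paper through the equivalence between a singleton part, a dominating vertex, and a dominating ONS of length $1$. Your only addition is to spell out the converse direction (which the paper dismisses as ``easy to check'') by invoking Remark~\ref{open_rk}(iii) to cap the length of every dominating ONS at $\gamma_{gr}^t(G)=2$, a clean and correct way to fill in that detail.
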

\begin{proof}
 Theorem~\ref{total_thm} and Remark~\ref{open_rk}~(i) imply that there are no open $k$-uniform graphs with $k\in \{1,3\}$ and every open $2$-uniform graph is a complete multipartite graph. Observe that for a complete multipartite graph $G=K_{p_{1}, \cdots ,p_t}$, if $p_i=1$ for some $i$, then the graph has a dominating vertex $v$, that is, a vertex $v$ with $N[v]=V(G)$. So $G$ has a dominating ONS of length $1$ and it cannot be open $2$-uniform. Thus, if $G$ is open $2$-uniform then $p_i\geq 2$ for each $i$. Lastly, it is easy to check that if $p_i\geq 2$ for each $i$ then $G$ is open $2$-uniform and the result follows.
\end{proof}

Note that the  characterizations of open $k$-uniform and total $k$-uniform graphs may be different in general. In fact, the structure of open $k$-uniform graphs is supposed to be more restricted as it requires additional conditions.

\section{Acknowledgements}

I would like to thank Didem G{\"o}z{\"u}pek and Martin Milani{\v c} for making me aware of the problem of characterization of $k$-uniform graphs. Also, I would like to thank anonymous referees for pointing out the references and observations in Section~3 which greatly reduced the proof of Corollary~\ref{openthm}.
\vskip0.3in

\bibliographystyle{elsarticle-num}

\begin{thebibliography}{10}



\bibitem{bresar_zero_forc}
 Bre{\v s}ar, B.,  Bujt{\'a}s, C., Gologranc, T., Klav{\v z}ar, S.,  Ko{\v s}mrlj, G.,  Patk{\' o}s,B.,  Tuza, Z., Vizer, M.:
Grundy dominating sequences and zero forcing sets.  Discrete Optim. {\bf 26},  66--77 (2017)

\bibitem{bresar_grundy_total}
Bre{\v s}ar, B.,  Bujt{\'a}s, C.,  Gologranc, T., Klav{\v z}ar, S., Ko{\v s}mrlj, G.,   Marc, T., Patk{\' o}s, B.,  Tuza, Z.,  Vizer, M.:
On Grundy total domination number in product graphs.  Discuss. Math. Graph Theory, in press, doi.org/10.7151/dmgt.2184

\bibitem{bresar_k_uni}
 Bre{\v s}ar, B., Gologranc, T., Milani{\v c}, M.,  Rall, D.F.,  Rizzi, R.:
Dominating sequences in graphs. Discrete Math. {\bf 336},  22-36 (2014)

\bibitem{total}
 Bre{\v s}ar, B.,  Henning, M.A.,  Rall, D.F.:
Total dominating sequences in graphs.
 Discrete Math. {\bf 339}, 1665--1676  (2016)

\bibitem{game1}
Bre{\v s}ar, B.,   Klav{\v z}ar, S., Rall, D.F.: Domination game and an imagination strategy. SIAM J. Discrete Math. {\bf 24},  979--991 (2010)

\bibitem{bresar_do}
 Bre{\v s}ar, B.,  Kos, T.,  Nasini, G.,  Torres, P.: Total dominating sequences in trees, split graphs, and under modular decomposition. Discrete Optim. {\bf 28},  16--30 (2018)

\bibitem{total4}
Gologranc, T.,  Jakovac, M.,  Kos, T., Marc, T.: On graphs with equal total domination and Grundy total domination number, 2019, arXiv:1906.12235.

\bibitem{total_game}
 Henning, M.A., Klav{\v z}ar, S.,  Rall, D.F.: Total Version of the Domination Game.  Graphs Combin. {\bf 31},  1453-1462 (2015)
 
 \bibitem{irsic}
 Ir{\v s}i{\v c}, V.: Effect of predomination and vertex removal on the game total domination number of a graph. Discret. Appl. Math. {\bf 257}, 216--225 (2019)
 
 \bibitem{jiang}
 Jiang, Y., Mei, L.: Game total domination for cyclic bipartite graphs. Discret. Appl. Math. {\bf 265}, 120--127 (2019)

\bibitem{game2}
Kinnersley, W.B.,  West, D.B., Zamani, R.:  Extremal problems for game domination number.
SIAM J. Discrete Math. {\bf 27},  2090--2107 (2013)

\bibitem{game3} 
 Ko{\v s}mrlj, G.: Realizations of the game domination number.
 J. Comb. Optim. {\bf 28},  447--461 (2014)

\bibitem{linalg}
 Lin, J.C.-H.:
Zero forcing number, Grundy domination number, and their variants.
 Linear Algebra Appl. {\bf 563},   240-254 (2019)



\end{thebibliography}

\end{document}